\newtheorem{theorem}{Theorem}%[section]
\newtheorem{remark}{Remark}%[section]
\newtheorem{corollary}{Corollary}%[section]
\newtheorem{definition}{Definition}%[section]
\newtheorem{assumption}{Assumption}
\newtheorem{problem}{Problem}
\def\no{\nonumber}
\def\BibTeX{{\rm B\kern-.05em{\sc i\kern-.025em b}\kern-.08em
    T\kern-.1667em\lower.7ex\hbox{E}\kern-.125emX}}
\begin{document}
\title{Spectrum Assignment of Stochastic Systems with Multiplicative Noise}
\author{Xiaomin~Xue,~%\IEEEmembership{Member,~IEEE,}
        Juanjuan~Xu,~%\IEEEmembership{Fellow,~OSA,}
        and~Huanshui~Zhang
        % <-this % stops a space
%\thanks{This work was supported by the National Natural Science Foundation of China under Grants 61821004, 62250056, and the Natural Science Foundation of Shandong Province (ZR2021ZD14, ZR2021JQ24), and Science and Technology Project of Qingdao West Coast New Area (2019-32, 2020-20, 2020-1-4), High-level Talent Team Project of Qingdao West Coast New Area (RCTD-JC-2019-05), Key Research and Development Program of Shandong Province (2020CXGC01208).}%\it{(Corresponding author: Juanjuan~Xu)}}
\thanks{Xiaomin Xue and Juanjuan Xu are with the School of Control Science and Engineering, Shandong University, Jinan, 250061, China (e-mail: xmxue96@163.com, juanjuanxu@sdu.edu.cn).}
\thanks{Huanshui Zhang is with College of Electrical Engineering and Automation, Shandong University of Science and Technology, Qingdao, 266000, China (e-mail: hszhang@sdu.edu.cn).}
}

\maketitle

\begin{abstract}
This paper studies the spectrum assignment of a class of stochastic systems with multiplicative noise.  A novel $\alpha$-spectrum assignment is proposed for discrete-time and continuous-time stochastic systems with multiplicative noise. In particular, $0$-spectrum assignment is equivalent to the pole assignment for the deterministic systems. The main contribution is two-fold: On the one hand, we present the conditions for $\alpha$-spectrum assignment and the design of feedback controllers based on the system parameters. On the other hand, when the system parameters are unknown, we present a stochastic approximation algorithm to learn the feedback gains which guarantee the spectrum of the stochastic systems to achieve the predetermined value. Numerical examples are provided to demonstrate the effectiveness of the proposed algorithms.
\end{abstract}

\begin{IEEEkeywords}
Spectrum assignment, stochastic systems, multiplicative noise, stochastic approximation
\end{IEEEkeywords}

\section{Introduction}
\label{sec:introduction}
The location of the poles directly influences the dynamic performance of the system, including stability, damping characteristics, transient response, and so on (see \cite{lam1999pole,kim1990optimal,zhang2021coupling}). Thus pole assignment becomes one of the fundamental problems in modern control theory, which aims to design appropriate feedback gains such that the poles of the closed-loop system are assigned to desired locations, and has been widely applied in various engineering fields such as power systems, robot dynamics, aircraft systems (see \cite{chow1989pole,takagi2005robust,steed2016algebraic}).

For deterministic systems, plenty of significant progress has been made for the pole assignment problem. For instance, \cite{wonham1967pole} studied the pole assignment problem for continuous-time linear systems, and demonstrated that the pole assignability under the state feedback controller is equivalent to completely controllability of system. For an arbitrarily given set of desired poles (real or complex conjugate), \cite{antsaklis1997linear} provided several methods to achieve pole assignment under the assumption of controllability, including the direct method, the controllable canonical form method, and the Ackermann's formula.
\cite{kimura1975pole} studied the pole assignment problem with incomplete state observation by using the output feedback controller, and presented the assignable conditions for the closed-loop poles.

Different from the deterministic case, when the system is affected by stochastic disturbances, particularly multiplicative noise, the classical formulation of pole assignment encounters theoretical challenges, as the notion of poles becomes inadequate.
As a result, the concept of spectrum assignment has been generalized to characterize the dynamic performance of stochastic systems. By analyzing the spectrum of closed-loop operator for stochastic system, \cite{zhang2004stabilizability} obtained that the continuous-time stochastic system is mean-square asymptotically stabilizable if and only if the operator spectrum lies in the open left-half complex plane. In \cite{zhang2009interval}, the interval stability of continuous-time stochastic systems was further studied, and a sufficient condition is provided for assigning the real parts of the operator spectrum within a specified interval. \cite{hou2011study} studied the regional stability of discrete-time stochastic system, and assigned the operator spectrum within given geometric regions in complex plane, such as circular region, sector region, annulus region.

Different from the existing results that assign the spectrum to certain domains with strictly unequal upper and lower bounds, which cannot degenerate into the case of a single point, in this paper, we will study the spectrum assignment to an exact location for a class of stochastic systems with multiplicative noise.
%Different from the existing results which assign the spectrum to certain domains, we will study the spectrum assignment to an exact location for a class of stochastic systems with multiplicative noise.
To this end, we propose a innovative definition of $\alpha$-spectrum assignment, where $0$-spectrum assignment is equivalent to the pole assignment when systems are reduced to the deterministic case. The main contribution is consisting of two aspects: On the one hand, we establish the assignable conditions for $\alpha$-spectrum assignment and give the design of feedback controllers based on system parameters. On the other hand, when the model parameters are unknown, we design a stochastic approximation algorithm to learn the feedback gains ensuring the spectrum of the stochastic systems to achieve the predetermined value.

The rest of the paper is organized as follows. Section \ref{sec2} states the spectrum assignment problem to be addressed in this paper. The main results and the corresponding stochastic approximation-based learning algorithm are presented for discrete-time and continuous-time system in Section \ref{sec3} and Section \ref{sec4}, respectively.  Section \ref{sec5} provides numerical examples to demonstrate the effectiveness of the designed algorithms.
Section \ref{sec6} concludes this paper.

The following notations will be used in this paper. $A'$ denotes the transpose of the matrix $A$. $\mathbb{R}$ and $\mathbb{R}^n$ $(n>1)$ denote 1-dimensional and n-dimensional Euclidean space, respectively. $\mathbb{S}^n$ denotes the set of all $n \times n$ symmetric positive definite matrices. $\mathbb{C}$ denotes the set of complex numbers. $I$ denotes the identity matrix. $\text{Rank}(A)$ denotes the rank of the matrix $A$.

\section{Problem Statement}\label{sec2}
Consider the following discrete-time stochastic systems with multiplicative noise:
\begin{align}\label{1}
\left\{\aligned
&x(k+1) =  H x(k) + L u(k) + F v(k) + u(k) w(k), \\
&x(0) =  x_0,\\
\endaligned\right.
\end{align}
where \( x(k) \in \mathbb{R}^n \) is the state, \( u(k) \in \mathbb{R}^n \) and \( v(k) \in \mathbb{R} \) are control inputs. $ H, L, F$ are constant matrices of compatible dimensions. $w(k)$ is a scalar-valued white noise with a mean of 0 and a variance of $\delta$ on a complete probability space $(\Omega, P, \mathcal{F}, \mathcal{F}_{k})$ where $\mathcal{F}_{k}=\sigma\{w_0, \ldots, w_k\}$.

The aim of the paper is to study the pole assignment of the stochastic system (\ref{1}). However, different from the deterministic systems, there exist multiple kinds of stabilizations, e.g., the mean-square stabilization, the almost sure
stabilization, and so on. This leads to that the pole assignment of stochastic system (\ref{1}) is much more complicated than the deterministic case.

\subsection{Preliminaries}\label{sec2.1}

Recalling the deterministic case, i.e., $u(k)=0$ in (\ref{1}), the definition of the pole assignment is to find a feedback gain matrix $K$ such that the eigenvalue of the matrix $H+FK$ can be arbitrarily given, in addition, the state feedback and the output feedback controllers can change the positions of the poles \cite{wonham1967pole,kimura1975pole}.

However, considering the involvement of control-dependent noise, the extension of the corresponding definition to stochastic system (\ref{1}) is non-trivial. To this end, we firstly introduce the following operator:
\begin{align}\label{8}
\mathcal{L}: X \mapsto & (H + L K_u + F K_v) X (H + L K_u + F K_v)' \no\\
& +  K_u X K_u', \quad \forall X \in \mathbb{S}^n,
\end{align}
where $K_u, K_v$ are constant matrices with compatible dimensions.

Then, we define the spectrum of operator $\mathcal{L}$ as the following set:
\begin{align}\label{9}
\sigma(\mathcal{L}) = \left\{ \mu \in \mathbb{C}: \mathcal{L}(X) = \mu X, \  X \in \mathbb{S}^n, \  X \neq 0 \right\}.
\end{align}
In particular, the spectrum $\sigma(\mathcal{L})$ can be obtained by solving the eigenvalues of the following matrix:
\begin{align}\label{m9}
&(M' M)^{-1} M' [(H+L K_u+F K_v) \otimes (H+L K_u+F K_v)\no\\
&+ K_u \otimes K_u ] M,
\end{align}
where $M$ is defined by $vec(X) = M \overline{vec}(X),$ $vec(X)$ denotes the column vector obtained by stacking all the elements of $X \in \mathbb{S}^n$ in a column-major order, $\overline{vec}(X)$ denotes the column vector obtained by stacking the elements of the lower triangular part of $X \in \mathbb{S}^n$ in a column-major order.

\begin{remark}\label{r1}
It is noted that all the spectrum of operator $\mathcal{L}$ lies in the unit circle is equivalent to the fact that the stochastic system (\ref{1}) is mean-square stable \cite{hou2011study}, under the state feedback controller
 \begin{align}\label{6}
u(k) &= K_u x(k), \quad v(k) = K_v x(k).
\end{align}
In this case, the closed-loop system of \eqref{1} becomes:
\begin{align}\label{7}
\begin{cases}
x(k+1) \!=\! (H \!+\! L K_u \!+\! F K_v) x(k) \!+\!  w(k) K_u x(k), \\
x(0) = x_0.
\end{cases}
\end{align}
%The mean-square stabilization of (\ref{7}) means that $\lim_{k\rightarrow\infty}$ $E(x(k)'x(k))$ $=$ $0$, i.e., $\lim_{k\rightarrow\infty}$ $E(x(k)x(k)')=0$.
%which is equivalent to that all the spectrum of operator $\mathcal{L}$ lies in the unit circle.
\end{remark}

\subsection{Definition of spectrum assignment}\label{sec2.2}

Based on Remark \ref{r1}, following the definitions of pole assignment in deterministic case \cite{antsaklis1997linear} and spectrum assignment in stochastic case \cite{zhang2009interval}, we now present the definition of the spectrum assignment for stochastic system (\ref{1}).
\begin{definition}\label{def}
The spectrum assignment for stochastic system (\ref{1}) means that for given $\mu_s, s=1,\ldots, \frac{n(n+1)}{2}$, there exist matrices $K_u$ and $K_v$
such that the spectrum of operator $\mathcal{L}$ is $\mu_s, s=1, \ldots, \frac{n(n+1)}{2}$.
\end{definition}

However, Definition \ref{def} motivated from deterministic case is not established for the stochastic system. The main reason lies in that the spectrum of the operator $\mathcal{L}$ can not be arbitrarily assigned in general. Taking the scalar system $n=1$ for example, the spectrum of operator $\mathcal{L}$ must be nonnegative, that is, one cannot find any $K_u$ and $K_v$ such that the spectrum of operator $\mathcal{L}$ is $\mu_1<0$.

To this end, we introduce a novel $\alpha$-spectrum assignment for stochastic system (\ref{1}) as follows.

\begin{definition}\label{def2}
The $\alpha$-spectrum assignment for stochastic system (\ref{1}) means that for any $\alpha \in \mathbb{R}$, and $\lambda_i, i=1,\ldots, n$, where $\lambda_i$ are either real numbers or conjugate complex numbers, there exist matrices $K_v$ and $K_u=\alpha I$ such that for given spectrum $\mu_s^* , s=1, \ldots, \frac{n(n+1)}{2}$ of operator $\mathcal{L}$ satisfies $\mu_s^* = \lambda_i \lambda_j + \alpha^2,\quad  j\geq i, \  i, j = 1, \dots, n$.
\end{definition}

\begin{remark}
According to Definition \ref{def2}, $0$-spectrum assignment is equivalent to the pole assignment for the deterministic system. In fact, the operator $\mathcal{L}$ in the deterministic case is reduced to
\begin{align}\label{d8}
\mathcal{L}: X \mapsto (H+ F K_v) X (H+ F K_v)', \quad \forall X \in \mathbb{S}^n .
\end{align}
It is easily obtained that the structure of the spectrum for the operator $\mathcal{L}$ must be in the form of $\mu_s=\lambda_i\lambda_j$, where $\lambda_i, i=1, \ldots, n$ are the eigenvalues of the matrix $H+FK_v$. Accordingly, the $\alpha$-spectrum assignment and the pole assignment are equivalent for the deterministic systems.
\end{remark}

%\begin{remark}
%The requirement that $\lambda_i, i=1, 2, ..., n$ are real numbers or conjugate complex numbers is to ensure that $H+ F K_v \in \mathbb{R}^{n \times n} $.
%\end{remark}

Based on the above definitions, we finally state the addressed problem as follows.
 \begin{problem}\label{P1}
Find $K_v$ to achieve $\alpha$-spectrum assignment for stochastic system (\ref{1}).
 \end{problem}

To conclude this section, we make some explanations for the studied system (\ref{1}).
 \begin{remark}
Plenty of practical application scenarios can be modeled by the system \eqref{1}, for example in autonomous vehicles, the system is controlled by two control inputs $u$ and $v$ through a communication network, where $v$ in system \eqref{1} represents the local controller, and $u$ represents the remote controller that may encounters disturbances (\cite{Asghari2018optimal}), as shown in Fig.\ref{pole}.
\end{remark}

\begin{figure}[h]
  \centering
  % Requires \usepackage{graphicx}
  \includegraphics[width=8cm]{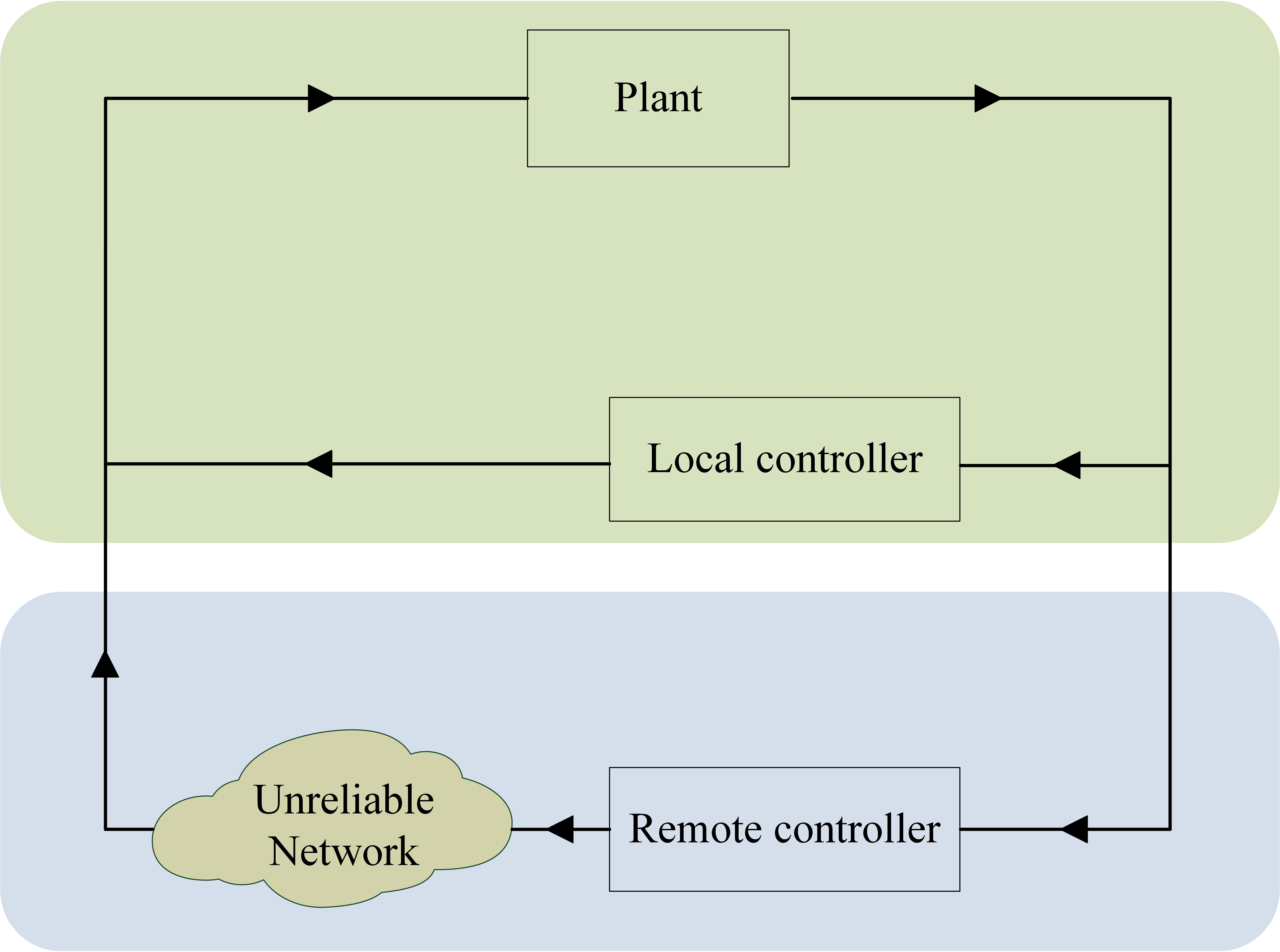}\\
  \caption{Communication network with local controller and remote controller}\label{pole}
\end{figure}

\begin{remark}\label{rem1}
System \eqref{1} can also be derived from the linear stochastic system $( A, B, \bar{A}, \bar{B} )$ with state- and control-dependent noise:
\begin{align}\label{2}
\left\{\aligned
&x(k+1)\!=\! A x(k) \!+\! B U(k) \!+\! (\bar{A} x(k) \!+\! \bar{B} U(k)) w(k), \\
&x(0)= x_0.
\endaligned\right.
\end{align}
 In fact, when the rank of the matrix $\bar{B}$ is equal to $n$, there exists an invertible matrix $Q$ such that $\bar{B} Q = \begin{bmatrix} I & 0 \end{bmatrix}.$ In this case, by defining
 $ BQ \triangleq \begin{bmatrix} L & F \end{bmatrix}, \quad A - L \bar{A} \triangleq  H,$ $
U(k) = \begin{bmatrix} q(k) \\ v(k) \end{bmatrix},$
and
$u(k) = \bar{A} x(k) + q(k),$ system \( (A, B, \bar{A}, \bar{B}) \) can be reformulated as system \eqref{1}.
\end{remark}

\section{Main Result}\label{sec3}
In this section, we give the main result for Problem 1 by dividing into two parts. The first part is to present the condition to guarantee the feasibility of spectrum assignment and the design of the feedback gain matrices. The second part is to design a stochastic approximation algorithm for the feedback gain matrices when the system matrices in (\ref{1}) are unknown.

\subsection{Spectrum Assignment}

To guarantee the feasibility of the spectrum assignment, we make the following controllability assumption:
\begin{assumption}\label{Ass1}
The system ($ G, F$) is completely controllable, where $G = H + \alpha L$ for the given $\alpha$ in Problem \ref{P1} and matrices $L, F$ in system \eqref{1}.
\end{assumption}

The main result of Problem \ref{P1} is then given as follows:
 \begin{theorem}\label{Thm1}
 Under Assumption \ref{Ass1}, the spectrum \eqref{9} generated by system \eqref{1} can be assigned to the desired set \( \{ \mu_1^*, \dots, \mu_\frac{n(n+1)}{2}^*\}  \) according to Definition \ref{def2}. In this case, the feedback gain \( K_v \)  is given by
\begin{align}
%K_u = & \alpha I_n,\label{12}\\
K_v =  a D + b G^n,\label{13}
\end{align}
where $a=\begin{bmatrix}a_{n-1}^* & \cdots & a_1^* & a_0^*\end{bmatrix}  $ denotes the coefficient of the following polynomial:
\begin{align}
&\prod_{i=1}^{n} (\beta  - \lambda_i) = \beta^n + a_{n-1}^* \beta^{n-1} + \cdots + a_1^* \beta + a_0^*, \label{16}
\end{align}
and
\begin{align}
b =&- \begin{bmatrix} 0&  \cdots & 0& 1\end{bmatrix}\begin{bmatrix} F& G F&  \dots &G^{n-1} F\end{bmatrix}^{-1}, \label{14}\\
D=& \begin{bmatrix} (b G^{n-1})' &  \cdots & (b G)'& b'\end{bmatrix}'.\label{166}
\end{align}

 \end{theorem}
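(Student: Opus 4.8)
The plan is to exploit the restriction $K_u=\alpha I$ built into Definition \ref{def2}: it turns the control-dependent noise term $K_u X K_u'$ into a pure spectral shift $\alpha^2 X$, leaving an ordinary single-input pole assignment problem for the pair $(G,F)$ with $G=H+\alpha L$. Concretely, setting $K_u=\alpha I$ and writing $\bar G \triangleq G+F K_v = H+\alpha L+F K_v$, the operator \eqref{8} reduces to $\mathcal{L}(X)=\bar G X \bar G' + \alpha^2 X$, so the eigenrelation $\mathcal{L}(X)=\mu X$ in \eqref{9} is equivalent to $\bar G X \bar G' = (\mu-\alpha^2)X$. Hence $\mu\in\sigma(\mathcal{L})$ if and only if $\mu-\alpha^2$ is an eigenvalue of the operator $\mathcal{T}:X\mapsto \bar G X \bar G'$ on $\mathbb{S}^n$; equivalently, using the matrix representation in the spirit of \eqref{m9}, if and only if $\mu-\alpha^2$ is an eigenvalue of $(M'M)^{-1}M'(\bar G\otimes\bar G)M$.

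Next I would identify the spectrum of $\mathcal{T}$: if $\bar G$ has eigenvalues $\lambda_1,\dots,\lambda_n$, then the eigenvalues of $\mathcal{T}$ on $\mathbb{S}^n$ are exactly the multiset $\{\lambda_i\lambda_j : 1\le i\le j\le n\}$, of cardinality $\frac{n(n+1)}{2}$. When $\bar G$ is diagonalizable with $\bar G p_i=\lambda_i p_i$, the symmetric matrices $p_ip_j'+p_jp_i'$ (for $i<j$) and $p_ip_i'$ are eigenvectors of $\mathcal{T}$ with eigenvalue $\lambda_i\lambda_j$ and are linearly independent, hence a basis of the symmetric subspace; the general case (repeated or complex-conjugate $\lambda_i$, or non-diagonalizable $\bar G$) follows because the eigenvalues of $\bar G\otimes\bar G$ are $\{\lambda_i\lambda_j\}_{i,j=1}^n$ while the symmetric subspace $\mathrm{range}(M)$ is a $\mathcal{T}$-invariant complement of dimension $\frac{n(n+1)}{2}$ to the skew-symmetric part. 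Consequently $\sigma(\mathcal{L})=\{\lambda_i\lambda_j+\alpha^2 : 1\le i\le j\le n\}$, which is exactly the form demanded by Definition \ref{def2}; thus the prescribed set $\{\mu_1^*,\dots,\mu_{n(n+1)/2}^*\}$ with $\mu_s^*=\lambda_i\lambda_j+\alpha^2$ is attained precisely when $K_v$ makes $\bar G=G+FK_v$ have spectrum $\{\lambda_1,\dots,\lambda_n\}$.

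It then remains to construct such a $K_v$ and verify \eqref{13}. Since $v(k)$ is scalar, $F$ is a column vector, $(G,F)$ is a single-input pair, and Assumption \ref{Ass1} makes the controllability matrix $\mathcal{C}=[\,F\ \ GF\ \cdots\ G^{n-1}F\,]$ invertible; classical single-input pole assignment then provides a unique row vector $K_v$ with $\det(\beta I-G-FK_v)=\prod_{i=1}^n(\beta-\lambda_i)$. Finally I would check that \eqref{13} coincides with this Ackermann gain: with $b=-[\,0\ \cdots\ 0\ 1\,]\mathcal{C}^{-1}$ and $D$ as in \eqref{166}, a direct expansion gives $aD+bG^n=b\big(G^n+a_{n-1}^*G^{n-1}+\cdots+a_1^*G+a_0^*I\big)=b\,\phi(G)$ with $\phi(\beta)=\prod_{i=1}^n(\beta-\lambda_i)$ by \eqref{16}; that $K_v=b\,\phi(G)$ indeed assigns the characteristic polynomial $\phi$ to $\bar G$ is the standard Ackermann fact, obtained by transforming $(G,F)$ to controllable canonical form or directly via Cayley--Hamilton.

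The step I expect to be the main obstacle is the second one: a clean, fully rigorous argument that the eigenvalues of $X\mapsto \bar G X \bar G'$ on $\mathbb{S}^n$ are exactly the $\frac{n(n+1)}{2}$ products $\lambda_i\lambda_j$ with $i\le j$ and nothing more, handled uniformly for diagonalizable and non-diagonalizable $\bar G$ and allowing complex-conjugate $\lambda_i$ so that some $\mu_s^*$ are genuinely complex (complex eigenvalues of the real matrix in \eqref{m9}). Once that spectral correspondence is established, the reduction in the first step is immediate and the third step is the routine Ackermann computation.
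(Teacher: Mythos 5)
Your proposal is correct and follows essentially the same route as the paper: place the eigenvalues of $\bar G = G+FK_v$ at $\lambda_1,\dots,\lambda_n$ via Ackermann's formula (Assumption \ref{Ass1} makes the single-input controllability matrix $[\,F\ GF\ \cdots\ G^{n-1}F\,]$ invertible), and use the symmetric outer products $\xi_i\xi_j'+\xi_j\xi_i'$ of the eigenvectors as eigenmatrices of $\mathcal{L}$, with the term $K_u X K_u'=\alpha^2 X$ contributing the shift by $\alpha^2$. You go further than the paper on two points worth keeping: you argue that the spectrum of $X\mapsto \bar G X \bar G'$ on the symmetric matrices is \emph{exactly} $\{\lambda_i\lambda_j\}_{i\le j}$ (the paper only exhibits an eigenmatrix for each desired $\mu_s^*$ and does not rule out additional spectrum, check linear independence of the $X_s$, or treat the non-diagonalizable case), and you explicitly verify that the gain \eqref{13} equals the Ackermann gain $b\,\phi(G)$ with $\phi(\beta)=\prod_{i=1}^n(\beta-\lambda_i)$, a computation the paper omits.
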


\begin{proof}
It is sufficient to show that there exists a nonzero matrix $X \in \mathbb{S}^n$ such that $\mathcal{L}(X) = \mu X$ in \eqref{8}-\eqref{9} for $\mu$ belonging to the desired set $\{ \mu_1^*, \dots, \mu_\frac{n(n+1)}{2}^*\}$.
By using the completely controllability of system ($ G, F$)
%for the given $\{\lambda_1, \dots, \lambda_n, \alpha\} $ in Definition \ref{def2}
and Ackermann's formula in \cite{Sontag2013mathematical},
there exists nonzero vectors $\xi_i \in R^n$ such that
\begin{align}\label{A4}
( G + F K_v) \xi_i = \lambda_i \xi_i.
 \end{align}
for $i=1, 2, ..., n$ with $K_v$ defined as \eqref{13}-\eqref{166}.
Define
\begin{align}\label{A6}
X \in \{X_s| X_s=\ &\xi_i \xi_j'+\xi_j \xi_i',\ j\geq i, \  i, j = 1, \dots, n, \no\\
&s=1, \ldots, \frac{n(n+1)}{2}\},
 \end{align}
 it is observed that $X \in \mathbb{S}^n.$
 Together with the definition of $\mu$ in Definition \ref{def2}, it is obtained from \eqref{A4} that
\begin{align}\label{A3}
( G + F K_v)  X ( G + F K_v)' = (\mu - \alpha^2) X,
 \end{align}
 where $\mu \in \{ \mu_1^*, \dots, \mu_\frac{n(n+1)}{2}^*\} .$
 %By letting \( K_u \) as \eqref{12}, we have
% \begin{align}\label{A1}
% K_u X K_u' = \alpha^2 X,
% \end{align}
%holds for the nonzero matrix $X \in \mathbb{S}^n$ in \eqref{A6}.
By combining with $K_u = \alpha I$ and \eqref{A3}, we obtain that
\begin{align}\label{A2}
 &(H + L K_u + F K_v) X (H + L K_u + F K_v)' +  K_u X K_u' \no\\
 &= \mu X,
\end{align}
for all $\mu$ belong to the desired set $\{ \mu_1^*, \dots, \mu_\frac{n(n+1)}{2}^*\}$. This completes Theorem \ref{Thm1}.
\end{proof}

According to Theorem \ref{Thm1}, we further derive the result of spectrum assignment problem for system \eqref{2}. To this end, for a given feedback gain matrix $K$, define the corresponding linear operator for  system $(A, B, \bar{A}, \bar{B}) $ as
\begin{align}\label{17}
\mathcal{L}_k: X \mapsto &(A + B K) X (A + B K)' \no\\
&+ (\bar{A} + \bar{B} K) X (\bar{A} + \bar{B} K)', \quad X \in \mathbb{S}^n.
\end{align}
The spectrum of \( \mathcal{L}_k \) is defined as:
\begin{align}\label{18}
\sigma(L_k) = \left\{ \lambda \in  \mathbb{C}:  \mathcal{L}_k(X) = \lambda X, \  X \in \mathbb{S}^n, \ X \neq 0 \right\}.
\end{align}

According to the analysis in Remark \ref{rem1}, we derive the the result of spectrum assignment problem for system \eqref{2} as follows.
\begin{corollary}\label{cor1}
Assume that

i) \( \text{Rank}(\bar{B}) = n \) in system \eqref{2}.

ii)  System \( (H + \alpha L, F) \) is completely controllable for the given $\alpha$ in Definition \ref{def2}, where matrices $ H, L, F $ defined in Remark \ref{rem1}.

Then, the spectrum \eqref{18} generated by system \eqref{2} can be assigned to the desired set $\{ \mu_1^*, \dots, \mu_\frac{n(n+1)}{2}^*\}$ according to Definition \ref{def2}. In this case,
\begin{align}\label{19}
K = \begin{pmatrix} K_u - \bar{A} \\ K_v \end{pmatrix},
\end{align}
where $K_u = \alpha I$, and $K_v$  is given by \eqref{16}.
\end{corollary}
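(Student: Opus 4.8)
The plan is to leverage Remark \ref{rem1} directly, reducing the spectrum assignment for system \eqref{2} to that of system \eqref{1}, and then invoke Theorem \ref{Thm1}. First I would observe that condition (i), namely $\text{Rank}(\bar{B}) = n$, guarantees the existence of the invertible matrix $Q$ with $\bar{B} Q = \begin{bmatrix} I & 0 \end{bmatrix}$ as in Remark \ref{rem1}, so that with the substitutions $BQ \triangleq \begin{bmatrix} L & F \end{bmatrix}$, $A - L\bar{A} \triangleq H$, $U(k) = \begin{bmatrix} q(k) \\ v(k) \end{bmatrix}$ and $u(k) = \bar{A} x(k) + q(k)$, the dynamics \eqref{2} become exactly \eqref{1}. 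Under this change of variables the state-feedback law $u(k) = K_u x(k)$, $v(k) = K_v x(k)$ for system \eqref{1} corresponds to $U(k) = \begin{bmatrix} q(k) \\ v(k) \end{bmatrix} = \begin{bmatrix} K_u x(k) - \bar{A} x(k) \\ K_v x(k) \end{bmatrix} = K x(k)$ with $K$ as in \eqref{19}.

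The next step is to verify that the closed-loop operator $\mathcal{L}_k$ in \eqref{17} for system \eqref{2} coincides with the operator $\mathcal{L}$ in \eqref{8} for the transformed system under this correspondence of feedback gains. Substituting $K = \begin{pmatrix} K_u - \bar{A} \\ K_v \end{pmatrix}$ into $A + BK$ gives $A + B\begin{pmatrix} K_u - \bar{A} \\ K_v \end{pmatrix} = A + \begin{bmatrix} L & F \end{bmatrix}\begin{pmatrix} K_u - \bar{A} \\ K_v \end{pmatrix} = A + L(K_u - \bar{A}) + F K_v = (A - L\bar{A}) + L K_u + F K_v = H + L K_u + F K_v$, recovering the first term of $\mathcal{L}$. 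Similarly $\bar{A} + \bar{B} K = \bar{A} + \begin{bmatrix} I & 0 \end{bmatrix}\begin{pmatrix} K_u - \bar{A} \\ K_v \end{pmatrix} = \bar{A} + (K_u - \bar{A}) = K_u$, which recovers the second term $K_u X K_u'$. Hence $\mathcal{L}_k = \mathcal{L}$ as operators on $\mathbb{S}^n$, so their spectra \eqref{18} and \eqref{9} are identical.

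Finally, condition (ii) states precisely that $(G, F) = (H + \alpha L, F)$ is completely controllable, which is Assumption \ref{Ass1}. Therefore Theorem \ref{Thm1} applies to the transformed system: choosing $K_u = \alpha I$ and $K_v = aD + bG^n$ as in \eqref{13}--\eqref{166} assigns $\sigma(\mathcal{L})$ to the desired set $\{\mu_1^*, \dots, \mu_{\frac{n(n+1)}{2}}^*\}$ according to Definition \ref{def2}. By the operator identity just established, this is exactly $\sigma(\mathcal{L}_k)$ for system \eqref{2}, and unwinding the change of variables gives the gain $K$ in \eqref{19}. This completes the argument.

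The routine steps are the matrix-algebra verifications above; the only point requiring care is the bookkeeping of the change of variables — in particular checking that the feedback term $u(k) = \bar{A}x(k) + q(k)$ is consistent with both the open-loop reformulation in Remark \ref{rem1} and the closed-loop substitution $q(k) = (K_u - \bar{A})x(k)$, and that the multiplicative-noise channel $(\bar{A}x + \bar{B}U)w$ transforms correctly into $u(k)w(k)$. I expect no genuine obstacle here, since the reduction is essentially a dictionary translation; the substantive content already resides in Theorem \ref{Thm1}.
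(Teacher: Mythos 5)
Your proposal is correct and follows essentially the same route as the paper's own proof: reduce system \eqref{2} to system \eqref{1} via the change of variables in Remark \ref{rem1}, identify the feedback gain $K$ in \eqref{19} through $q(k)=(K_u-\bar A)x(k)$, and invoke Theorem \ref{Thm1} under condition (ii) (which is exactly Assumption \ref{Ass1}). The only difference is that you additionally verify the operator identity $A+BK=H+LK_u+FK_v$ and $\bar A+\bar BK=K_u$ explicitly, which the paper leaves implicit.
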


\begin{proof}
According to Theorem \ref{Thm1}, we only need to show that under the condition i), system (\ref{1}) with feedback controller (\ref{6}) can be reformulated as system \eqref{2} with the controller $U(k) =K x(k)$, where $K$ defined by \eqref{19}. In fact, by (\ref{6}) and defining $u(k) = \bar{A} x(k) + q(k)$, we have \begin{align}\label{Q_k}
q(k) = u(k) - \bar{A} x(k) = (K_u - \bar{A} ) x(k),
\end{align}
In view of Remark \ref{rem1}, and by substituting \eqref{Q_k} into $U(k) = \begin{bmatrix} q(k) \\ v(k) \end{bmatrix}$, we obtain that $U(k) =K x(k)$ with the definition \eqref{19}. This completes the proof of Corollary \ref{cor1}.
\end{proof}
\subsection{Learning Algorithm}\label{sec3.2}
In the above part, the feedback gain is given by (\ref{13}) which strongly rely on the system matrix parameters $H, L, F$. However, the exact acquisition of these parameters are difficult \cite{mukherjee2022reinforcement}, which motivates the following learning algorithm to design the feedback gains for spectrum assignment.

To this end, we transform the vector-form system \eqref{7} to the following matrix form:
\begin{align}\label{Xk}
\begin{cases}
X(k+1) \!=\! (H \!+\! L K_u \!+\! F K_v) X(k) \!+\!  W(k) K_u X(k), \\
X(0) = X_0,
\end{cases}
\end{align}
where $X(k)=[x(1), x(2), ..., x(n)],$ $W(k)$ is a diagonal matrix and the main diagonal elements are random variables with a mean of 0 and a variance of $\delta$.

By letting $X_0=I_n,$ we derive the observation of $X_1$ as
 \begin{align}\label{X1}
X(1) = (H  +  L K_u +  F K_v) +  K_u W(0).
\end{align}
Note that $K_u =  \alpha I$ and $G = H + \alpha L$,  the observation of $X(1)$ in \eqref{X1} can be rewritten as
 \begin{align}\label{X2}
X(1) = (G  +  F K_v) +  \alpha  W(0).
\end{align}
Under Assumption \ref{Ass1} and according to \cite{Chen2000pole}, the pole of the system \( (G, F) \)  can be assigned based on the observation of $X(1).$

In the following, we will present the learning algorithm to address the $\alpha$-spectrum assignment problem for stochastic system (\ref{1}) with unknown matrix parameters, which is divided into three steps. The first step is to determine the update of an index function; The second step is to design a stochastic approximation algorithm to learn the feedback gains; The third step is to specify the termination condition of the algorithm.

Firstly, we aim to give the definition of an index function $J(p)$ with initial value $J(0)=0.$ To this end, we define a vector function $K_0(p, j, i)=L_{i-1}$  for $i=1, 2, ..., n+1, j=J(p), J(p)+1, ..., p=0, 1, 2, \ldots,$ where
\begin{align}\label{Alg1}
L_0 = [0 \  0 \cdots 0], \quad L_1 = [1\ 0 \cdots 0], \quad L_2 = [1 \ 1 \cdots 1].
\end{align}
That is, $K_0$ take values in the set $\{L_0, L_1, \dots, L_n\}$ periodically with \( n \)-dimensional row vectors.

We next give the update of the index function $J(p)$ for  $p\geq 1$. By letting  $K_v=K_0(p, j, i)$, we obtain the observation of $X(1)$ from (\ref{X2}), denoted by $X_1(K_0(p,j,i))$, and the corresponding characteristic polynomial denoted by
\begin{align}\label{X4}
&\det(\lambda I - X_1(K_0(p,j,i)))\no\\
 = &\lambda^n + a_1(K_0(p,j,i))\lambda^{n-1} + \cdots +  a_n(K_0(p,j,i)).
\end{align}
Then we derive the coefficients as a row vector:
\begin{align}\label{X5}
& a(K_0(p,j,i)) \no\\
 =& [ a_1(K_0(p,j,i)),  a_2(K_0(p,j,i)), \dots,  a_n(K_0(p,j,i))].
\end{align}
Thus, we have the following matrix $A(p, j+1)$ as
\begin{align}\label{Alg2}
A(p, j+1) = \begin{bmatrix}
a(K_0(p,j,2)) - a(K_0(p,j,1)) \\
\vdots \\
a(K_0(p,j,n+1))   - a(K_0(p,j,n))
\end{bmatrix},
\end{align}
and derive recursively the average of $A(p, j+1)$ as
\begin{align}\label{Alg3}
C(p, j+1) = \frac{j}{j+1} C(p, j) + \frac{1}{j+1}A(p, j+1).
\end{align}
for $j=0, 1, ...$.
Accordingly, we give the update of the index function $J(p)$ as
\begin{align}\label{Alg4}
J(p+1)=\inf\{j: j>J(p), \det C(p, J(p+1)) \neq 0\}.
\end{align}

Secondly, we design the following stochastic approximation algorithm:
\begin{align}\label{Alg33}
K(p+1, s+1) =& K(p+1, s) - \beta (s)[a(K(p+1, s)- a]\no\\
&\times C(J(p+1)),
\end{align}
for $s=1,2, ...$ with arbitrarily given initial value $K(p+1, 0),$ where $a$ denotes the coefficient vector of the characteristic polynomial as shown in \eqref{16}, $a(K(p+1, s))$ represents the following coefficient vector
\begin{align}\label{XX5}
& a(K(p+1, s))\no\\
= &[ a_1(K(p+1, s),  a_2(K(p+1, s)), \dots,  a_n(K(p+1, s))]
\end{align}
satisfying the following characteristic polynomial
\begin{align}\label{XX4}
&\det(\lambda I - X_1(K(p+1, s))\no\\
 = &\lambda^n + a_1(K(p+1, s)\lambda^{n-1} + \cdots +  a_n(K(p+1, s)),
\end{align}
while $X_1(K(p+1, s))$ denotes the observation of $X(1)$ under the feedback gain $K_v=K(p+1, s)$ in (\ref{X2}), $\beta(s)$ denotes a sequence of positive real numbers with
\begin{align}\label{bs}
\sum_{s=1}^{\infty} \beta(s) = \infty, \quad \sum_{s=1}^{\infty} \beta(s)^r < \infty,
\end{align}
 for $r\in(1,2].$

Thirdly, we specify the termination condition of the algorithm (\ref{Alg33}). For the given convergence threshold $\epsilon$, if
\begin{align}\label{condition}
|K(p+1, s+1)-K(p+1, s)| <\epsilon,
\end{align}
then we output the desired feedback gain $K_v=K(p+1, s+1)$. Otherwise, we run the algorithm (\ref{Alg33}) from $s=1$ to $s(p+1)$, where
\begin{align}\label{XX444}
s(p+1) = &\inf\{s : s > 0, \|K(p+1, s) -\beta ( s) [a(K(p+1, s)\no\\
& - a] C(J(p+1))\| > M(p+1)\},
\end{align}
while $M(p)$ denotes a sequence of positive real numbers with
\begin{align}
M(p) \to \infty, \ p \to \infty.\label{Mp1}\\
M(p) < M(p+1), \ \forall p \geq 0.\label{Mp2}
\end{align}
Then let $p=p+1$ and repeat the above procedures until the condition \eqref{condition} is satisfied.

As a conclusion, we present the corresponding algorithm as shown in Algorithm 1 and the main result.

\begin{theorem}\label{Thm2}
Under Assumption \ref{Ass1}, there exists a $p>0$ in Algorithm \ref{algorithmic1} such that $\lim_{s\rightarrow \infty}K(p,s)=K^\star$, where $K(p,s)$ calculated by \eqref{Alg33}, $K^\star$ denotes the desired feedback gain matrix in \eqref{13}.%desired feedback gain satisfying that the characteristic polynomial $\det(\lambda I - (G+F K^\star))$ equals \eqref{16}.}
\end{theorem}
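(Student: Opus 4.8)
The plan is to establish convergence of the stochastic approximation recursion \eqref{Alg33} by interpreting it as a Robbins–Monro-type scheme whose mean ODE has $K^\star$ as a globally attracting equilibrium, and then to argue that the outer index $p$ eventually stabilizes so that the truncation step \eqref{XX444} becomes inactive. First I would fix $p$ large enough that $J(p)$ is defined (this requires showing that the averaging recursion \eqref{Alg3}–\eqref{Alg4} eventually produces a nonsingular $C(p,J(p+1))$, which follows from Assumption \ref{Ass1}: the periodic probing inputs $L_0,\dots,L_n$ in \eqref{Alg1} exercise the controllable pair $(G,F)$ so that the expected coefficient-difference matrix $\mathbb{E}[A(p,j+1)]$ is nonsingular, and by the strong law of large numbers $C(p,j)\to\mathbb{E}[A]$ a.s.). On that event I would treat $C(J(p+1))$ as a fixed invertible matrix and analyze \eqref{Alg33} as the recursion $K_{s+1}=K_s-\beta(s)\,[a(K_s)-a]\,C$, noting that $K\mapsto a(K)$ is the smooth (in fact polynomial/rational) map sending a feedback gain to the characteristic-polynomial coefficients of $G+FK$, and that by Ackermann's formula this map is affine with full-rank Jacobian, with unique zero of $a(K)-a$ at $K=K^\star$ given by \eqref{13}.

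Next I would verify the standard hypotheses for almost-sure convergence of stochastic approximation (e.g. the Kushner–Clark or Robbins–Monro conditions): (i) the step sizes satisfy \eqref{bs}; (ii) the observation noise enters $a(K(p+1,s))$ through $X_1$ in \eqref{X2} as a martingale-difference-plus-bias term whose conditional variance is bounded on bounded sets — here I would expand $a(X_1(K))$ around its noise-free value $a(G+FK)$ and show the fluctuation is mean-zero to leading order with bounded higher moments, using that $w(0)$ (equivalently the diagonal entries of $W(0)$) has finite variance $\delta$; and (iii) the mean field $h(K):=-[a(K)-a]C$ points toward $K^\star$, i.e. $\langle K-K^\star, h(K)\rangle<0$ for $K\neq K^\star$, which holds because $a(K)-a$ is an affine isomorphism in $K$ and one may choose (or the algorithm effectively produces) $C$ making $[a(K)-a]C$ a stable linear correction — this is the point where I would lean on the linear structure so that the mean ODE $\dot K=h(K)$ is globally exponentially stable. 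The truncation device \eqref{XX444}–\eqref{Mp2} with $M(p)\to\infty$ is the standard expanding-truncations trick: I would invoke Chen's projected stochastic approximation theorem to conclude that the number of truncations is finite almost surely, hence for some finite random $p$ the recursion stays within $M(p+1)$ forever and converges to $K^\star$; combined with the termination test \eqref{condition}, Algorithm \ref{algorithmic1} outputs $K^\star$.

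The main obstacle I anticipate is item (iii) — rigorously pinning down that the mean-field direction $-[a(K)-a]\,C(J(p+1))$ is a descent direction toward $K^\star$ for the particular $C$ generated by \eqref{Alg2}–\eqref{Alg3}. One must show that $C$ is (close to) the Jacobian $\partial a/\partial K$ evaluated appropriately, or at least that $[a(K)-a]\,C = (K-K^\star)\,\Phi$ for some positive-definite-in-the-right-inner-product $\Phi$; the construction of $A(p,j+1)$ as finite differences of $a(\cdot)$ along the coordinate-like probes $L_i-L_{i-1}$ strongly suggests $C$ is exactly this Jacobian (which is constant because $K\mapsto a(G+FK)$ is affine), but making that identification clean — and handling the sampling noise in $C$ via \eqref{Alg3} — is the crux. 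A secondary difficulty is the interplay between the inner loop in $s$ and the outer loop in $p$: one must ensure that restarting at $s=1$ after a truncation does not prevent eventual convergence, which again is handled by the expanding-truncations framework provided $M(p)$ grows without bound as in \eqref{Mp1}. I would also need to confirm that the noise-free fixed point of the $s$-recursion genuinely coincides with the model-based gain \eqref{13}, i.e. that assigning the eigenvalues of $G+FK_v$ to $\lambda_1,\dots,\lambda_n$ is equivalent to forcing $a(K_v)=a$, which is immediate from \eqref{16} and uniqueness of characteristic coefficients.
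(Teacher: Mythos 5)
Your proposal follows essentially the same route as the paper: the paper's entire proof is a one-line appeal to the expanding-truncations stochastic approximation theorem of the cited reference \cite{Chen2000pole}, asserting that Assumption \ref{Ass1} and the construction of Algorithm \ref{algorithmic1} fulfill its hypotheses. Your sketch is simply a (considerably more explicit) verification of those hypotheses --- affine dependence of the characteristic coefficients on $K$ for the single-input pair $(G,F)$, identification of $C(J(p+1))$ with the finite-difference Jacobian, martingale-difference noise from $W(0)$, and finiteness of the number of truncations --- so it matches the paper's argument in substance while filling in the details the paper leaves to the reference.
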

\begin{proof}
Under Assumption \ref{Ass1} and according to Algorithm \ref{algorithmic1}, the conditions of Theorem \ref{Thm2} in \cite{Chen2000pole} are fulfilled, which implies that the convergence of Algorithm \ref{algorithmic1} holds.
\end{proof}

\begin{algorithm}[h!]
\caption{Stochastic approximation-based learning algorithm}
\label{algorithmic1}
\begin{algorithmic}[1]
\STATE \textbf{Input:} Parameters $n$, $\alpha$, $\lambda_i,$ $i=1, ..., n$
\STATE \textbf{Output:} Feedback gain $K_v$ for Problem 1

\STATE Define the $n$-dimensional row vectors $\{L_0, L_1, \dots, L_n\}$ by \eqref{Alg1}, the sequence of positive real numbers $\beta (s)=\frac{1}{s}, M(p)=p,$
%$\beta (s)$ and $M(p)$ by \eqref{bsMp},
the convergence threshold $\epsilon=10^{-8}.$

\STATE Let $J(0)=0$. Start from $p=0$.

\STATE Let $K_0(p, j, i)=L_{i-1}$, $j=J(p), J(p)+1, ...,$ $i=1, 2, ..., n+1.$ Obtain the corresponding observation of $X(1)$ by system with unknown coefficients. Calculate the characteristic polynomial \eqref{X4}, the coefficients $ a(K_0(p,j,i))$ by \eqref{X5}.

\STATE Calculate the matrix $A(j+1)$ by \eqref{Alg2}, the matrix $C(j+1)$ by \eqref{Alg3}. This step start from $j=J(p)$, end for $j=J(p+1)$, which defined by \eqref{Alg4}.

\STATE Let $K(p+1, 0)$ be an arbitrary $n$-dimensional vector. Obtain the corresponding observation of $X(1)$ by system with unknown coefficients. Calculate the characteristic polynomial \eqref{XX4}, the coefficients $ a(K(p+1, s))$ by \eqref{XX5}.

\STATE  Update $K(p+1, s)$ by the stochastic approximation algorithm \eqref{Alg33} for $s=1,2, ...$, until $s(p+1)$ defined by \eqref{XX444}.

\STATE For $p=1, 2, ...$, continue step 5-8 until condition \eqref{condition} is satisfied.
\end{algorithmic}
\end{algorithm}

%{\color{red}For convenience, we select
%\begin{align}\label{bsMp}
%\beta (s)=\frac{1}{s}, M(p)=p,
%\end{align}
% in the following.}

%Denote $K^\star$ as the desired feedback gain satisfying that the characteristic polynomial $\det(\lambda I - (G+F K^\star))$ equals \eqref{16}.
%The effectiveness of Algorithm \ref{algorithmic1} is shown in the following Theorem.
\section{ Spectrum Assignment for Continuous-time Systems}\label{sec4}
\subsection{Continuous-time Stochastic Systems}
Consider the following continuous-time stochastic systems with multiplicative noise:
\begin{align}\label{c1}
\left\{\aligned
&dx(t) = \left[H x(t) + L u(t) + F v(t)\right]dt + u(t) d\sigma(t)  \\
&x(0) =  x_0,\\
\endaligned\right.
\end{align}
where \( x(t) \in \mathbb{R}^n \) is the state, \( u(t) \in \mathbb{R}^n \) and \( v(t) \in \mathbb{R} \) are control inputs. $ H, L, F$ are constant matrices of compatible dimensions. $\sigma(t)$ is  one-dimensional standard Brownian motion on a complete probability space $(\Omega, \mathcal{F}, P, \mathcal{F}_{t}|_{t\geq0})$.

Similar to Section \ref{sec2.1}, we introduce the following operator:
\begin{align}\label{c8}
\mathcal{L}_c: X \mapsto & (H + L T_u + F T_v) X + X (H + L T_u + F T_v)' \no\\
& +  T_u X T_u', \quad \forall X \in \mathbb{S}^n,
\end{align}
where $T_u, T_v$ are constant matrices with compatible dimensions, and define the spectrum of operator $\mathcal{L}_c$ as the following set:
\begin{align}\label{c9}
\sigma(\mathcal{L}_c) = \left\{ \mu \in \mathbb{C}: \mathcal{L}_c(X) = \mu X, \  X \in \mathbb{S}^n, \  X \neq 0 \right\},
\end{align}
where $\mathbb{C}$ denotes the set of complex numbers. In particular, the spectrum $\sigma(\mathcal{L}_c)$ can be obtained by solving the eigenvalues of the following matrix:
\begin{align}\label{mc9}
&(M' M)^{-1} M' [I_n \otimes (H+L T_u+F T_v)\no\\
&+ (H+L T_u+F T_v) \otimes I_n + T_u \otimes T_u ] M,
\end{align}
where $M$ is defined by $vec(X) = M \overline{vec}(X).$
\begin{remark}
It is noted that all the spectrum of operator $\mathcal{L}_c$ lies in  the open left-hand complex plane is equivalent to the fact that the stochastic
system \eqref{c1}  is mean-square stable under the state feedback controller
 \begin{align}\label{c6}
u(t) &= T_u x(t), \quad v(t) = T_v x(t),
\end{align}
and the corresponding closed-loop system of \eqref{c1} becomes:
\begin{align}\label{c7}
\begin{cases}
dx(t) = (H + L T_u + F T_v) x(t)dt + T_u x(t)d\sigma(t),\\
x(0) = x_0.
\end{cases}
\end{align}
\end{remark}
%the Definition By using the state feedback controller:
%The mean-square stabilization of (\ref{c7}) means that $\lim_{k\rightarrow\infty}$ $E(x(k)x(k)')=0$, which is equivalent to that all the spectrum of operator $\mathcal{L}_c$ lies in the open left-hand side complex plane where the operator $\mathcal{L}_c$ are defined as follows:

Then, similar to Section \ref{sec2.2}, we present the definitions of the spectrum assignment and  $\alpha$-spectrum assignment for stochastic system (\ref{c1}).
\begin{definition}\label{defc1}
The spectrum assignment for stochastic system (\ref{c1}) means that for given $\mu_s, s=1,\ldots, \frac{n(n+1)}{2}$, there exist matrices $T_u$ and $T_v$ such that the spectrum of operator $\mathcal{L}_c$ is $\mu_s, s=1, \ldots, \frac{n(n+1)}{2}$.
\end{definition}

\begin{remark}
The spectrum assignment in Definition \ref{defc1} cannot be arbitrarily in general. Taking the scalar system $n=1$ for example, denote the eigenvalue of $H+L T_u+F T_v$ and $T_u$ as $\kappa_1$ and $\kappa_2$, respectively, the spectrum of operator $\mathcal{L}_c$ must be greater than or equal to a certain real number, that is, one cannot find any $T_u$ and $T_v$ such that the spectrum of operator $\mathcal{L}_c$ is $\mu_1<2 \kappa_1 + \kappa_2^2$.
\end{remark}

%To this end, we introduce the following $\alpha$-spectrum assignment for stochastic system (\ref{c1}).

\begin{definition}\label{defc2}
The $\alpha$-spectrum assignment for stochastic system (\ref{c1}) means that for any $\alpha \in \mathbb{R}$, and $\lambda_i \in \mathbb{R}, i=1,\ldots, n$, there exist matrices $T_v$ and $T_u=\alpha I$ such that for given spectrum $\mu_s^* , s=1, \ldots, \frac{n(n+1)}{2}$ of operator $\mathcal{L}^c$ satisfies $\mu_s^* = \lambda_i +\lambda_j + \alpha^2,\quad  j\geq i, \  i, j = 1, \dots, n$.
\end{definition}

\begin{remark}
According to Definition \ref{defc2}, $0$-spectrum assignment is equivalent to the pole assignment for the deterministic system. In fact, the operator $\mathcal{L}_c$ in the deterministic case is reduced to
\begin{align}\label{dc8}
\mathcal{L}_c: X \mapsto (H+ F T_v) X+X (H+ F T_v)', \quad \forall X \in \mathbb{S}^n .
\end{align}
It is easily obtained that the structure of the spectrum for the operator $\mathcal{L}_c$ must be in the form of $\mu_s=\lambda_i+\lambda_j$, where $\lambda_i, i=1, \ldots, n$ are the eigenvalues of the matrix $H+FK_v$. Accordingly, the spectrum assignment and the pole assignment are equivalent for the deterministic systems.
\end{remark}

Based on the above definitions, the addressed problem is stated as follows.
 \begin{problem}\label{Pc1}
Find $T_v$ to achieve $\alpha$-spectrum assignment for stochastic system (\ref{c1}).
 \end{problem}

In the following, we first present the condition for spectrum assignment and the design of the feedback gain matrices for Problem \ref{Pc1}, and then design a stochastic approximation algorithm for the feedback gain matrices when the system matrices in \eqref{c1} are unknown.
%\section{ Spectrum Assignment for Continuous-time Systems}\label{sec5}
\subsection{Spectrum Assignment}
Similar to Assumption \ref{Ass1}, we make the following controllability assumption for continuous-time systems to guarantee the feasibility of the spectrum assignment.
\begin{assumption}\label{Assc1}
The deterministic system ($ G, F$) is completely controllable, where $G = H + \alpha L$ for the given $\alpha$ in Problem \ref{Pc1} and matrices $L, F$ in system \eqref{c1}.
\end{assumption}

The main result of Problem \ref{Pc1} is given as follows:
 \begin{theorem}\label{Thmc1}
% For any given $\{\lambda_1, \dots, \lambda_n, \alpha\} \in\Lambda$ in Problem \ref{P1}, assume that the deterministic system ($G, F$) is completely controllable with $G = H + \alpha L.$ Then,
 Under Assumption \ref{Assc1}, the spectrum \eqref{c9} generated by system \eqref{c1} can be assigned to the desired set \( \{ \mu_1^*, \dots, \mu_\frac{n(n+1)}{2}^*\}  \) in definition \ref{defc2}. In this case, the feedback gain \( T_v \) is given by
\begin{align}
%T_u = & \alpha I_n,\label{c12}\\
T_v =  a D + b G^n,\label{c13}
\end{align}
where $a=\begin{bmatrix}a_{n-1}^* & \cdots & a_1^* & a_0^*\end{bmatrix}  $ denotes the coefficient of the following polynomial:
\begin{align}
%&\det (\beta I - G) = \beta^n + a_{n-1}  \beta^{n-1} + \cdots + a_1 \beta + a_0, \label{15}\\
&\prod_{i=1}^{n} (\beta  - \lambda_i) = \beta^n + a_{n-1}^* \beta^{n-1} + \cdots + a_1^* \beta + a_0^*, \label{c16}
\end{align}
and
\begin{align}
b =&- \begin{bmatrix} 0&  \cdots & 0& 1\end{bmatrix}\begin{bmatrix} F& G F&  \dots &G^{n-1} F\end{bmatrix}^{-1}, \label{c14}\\
D=& \begin{bmatrix} (b G^{n-1})' &  \cdots & (b G)'& b'\end{bmatrix}'.\label{c166}
\end{align}

 \end{theorem}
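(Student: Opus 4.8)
The plan is to mirror the proof of Theorem \ref{Thm1}, adapting it to the continuous-time operator $\mathcal{L}_c$ in \eqref{c8}, where the conjugation $(\cdot)X(\cdot)'$ is replaced by the Lyapunov-type combination $(\cdot)X + X(\cdot)'$. Set $G = H + \alpha L$ and $T_u = \alpha I$, so that $H + L T_u + F T_v = G + F T_v$. First I would invoke Assumption \ref{Assc1}: since $(G,F)$ is completely controllable, Ackermann's formula (the same construction as in \cite{Sontag2013mathematical} used for Theorem \ref{Thm1}) guarantees that the feedback gain $T_v = aD + bG^n$ defined by \eqref{c13}--\eqref{c166} places the eigenvalues of $G + F T_v$ at exactly $\lambda_1,\dots,\lambda_n$. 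Hence there exist nonzero eigenvectors $\xi_i \in \mathbb{R}^n$ with $(G + F T_v)\xi_i = \lambda_i \xi_i$ for $i = 1,\dots,n$; because the $\lambda_i$ are real by the hypothesis of Definition \ref{defc2}, the $\xi_i$ may be taken real.

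Next I would construct the candidate eigen-matrices of $\mathcal{L}_c$. Define, for $j \ge i$, $X_s = \xi_i \xi_j' + \xi_j \xi_i'$, which ranges over a family of $\frac{n(n+1)}{2}$ symmetric matrices lying in $\mathbb{S}^n$ (one should note these are symmetric; strict positive-definiteness is used only in the same nominal sense as in Theorem \ref{Thm1}, or one works in the space of symmetric matrices). The key computation is then
\begin{align}\label{ct-key}
(G + F T_v) X_s + X_s (G + F T_v)' &= \lambda_i \xi_i \xi_j' + \lambda_j \xi_j \xi_i' + \lambda_j \xi_i \xi_j' + \lambda_i \xi_j \xi_i' \no\\
&= (\lambda_i + \lambda_j) X_s,
\end{align}
using $(G+FT_v)\xi_i = \lambda_i \xi_i$ and its transpose $\xi_i'(G+FT_v)' = \lambda_i \xi_i'$. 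Combining \eqref{ct-key} with $T_u X_s T_u' = \alpha^2 X_s$ gives $\mathcal{L}_c(X_s) = (\lambda_i + \lambda_j + \alpha^2) X_s = \mu_s^* X_s$, which is exactly the requirement of Definition \ref{defc2}. Since this holds for every pair $(i,j)$ with $j \ge i$, all $\frac{n(n+1)}{2}$ prescribed values $\mu_s^*$ are attained, completing the argument.

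I expect the only genuinely delicate point to be the eigenvector/eigenspace bookkeeping: if $G + F T_v$ has repeated eigenvalues or is not diagonalizable, the $\xi_i$ need not be linearly independent and the $X_s$ need not be linearly independent either, so one must be careful about what "the spectrum is $\{\mu_1^*,\dots\}$" means — here it should be read in the sense of Definition \ref{defc2} (existence of a nonzero symmetric $X$ with $\mathcal{L}_c(X) = \mu_s^* X$ for each $s$), which is what \eqref{ct-key} delivers regardless of multiplicities. A secondary routine check is that $T_v$ from \eqref{c13}--\eqref{c166} is well-defined, i.e. the controllability matrix $[\,F\ \ GF\ \cdots\ G^{n-1}F\,]$ is invertible; this is immediate from Assumption \ref{Assc1}. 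Everything else is the same Ackermann-formula verification already used for the discrete-time case, so no new machinery is needed.
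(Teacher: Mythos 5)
Your proposal is correct and is exactly the argument the paper intends: the paper omits the proof of Theorem \ref{Thmc1} with the remark that it is ``similar to Theorem \ref{Thm1},'' and your adaptation---Ackermann placement of the eigenvalues of $G+FT_v$, the eigen-matrices $X_s=\xi_i\xi_j'+\xi_j\xi_i'$, and the replacement of the quadratic conjugation by the Lyapunov-type identity $(G+FT_v)X_s+X_s(G+FT_v)'=(\lambda_i+\lambda_j)X_s$ together with $T_uX_sT_u'=\alpha^2X_s$---is precisely the continuous-time analogue of the paper's discrete-time proof. No discrepancy to report.
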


\begin{proof}
The proof is similar to Theorem \ref{Thm1}, and thus be omitted here.
\end{proof}

According to Theorem \ref{Thmc1}, we further derive the result of spectrum assignment problem for the standard continuous-time linear stochastic system: % $( A, B, \bar{A}, \bar{B} )$:
\begin{align}\label{c2}
\left\{\aligned
&dx(t) = \left[Ax(t) + Bu(t)\right]dt + \left[\bar{A}x(t) + \bar{B}u(t)\right]d\sigma(t), \\
&x(0)= x_0,
\endaligned\right.
\end{align}
when the rank of the matrix $\bar{B}$ is equal to $n$.
To this end, for a given feedback gain matrix $T$, define the corresponding linear operator for  system \eqref{c2} as
\begin{align}\label{c17}
\mathcal{L}_{c,T}: X \mapsto &(A + B T) X+X (A + B T)' \no\\
&+ (\bar{A} + \bar{B} T) X (\bar{A} + \bar{B} T)', \quad X \in \mathbb{S}^n,
\end{align}
and the spectrum of \( \mathcal{L}_{c,T} \) as:
\begin{align}\label{c18}
\sigma(\mathcal{L}_{c,T}) = \left\{ \lambda \in  \mathbb{C}:  \mathcal{L}_{c,T}(X) = \lambda X, \  X \in \mathbb{S}^n, \ X \neq 0 \right\}.
\end{align}
Then we have the result of spectrum assignment problem for system \eqref{c2} as follows.
%Similar to the analysis in Remark \ref{rem1}, we derive the the result of spectrum assignment problem for system \eqref{c2} as follows.
\begin{corollary}\label{corc1}
Assume that

i) \( \text{Rank}(\bar{B}) = n \) in system \eqref{c2}.

ii) System \( (H + \alpha L, F) \) is completely controllable for the given $\alpha$ in Definition \ref{defc2}, where matrices $ H, L, F $ defined in Remark \ref{rem1}.

Then, the spectrum \eqref{c18} generated by system \eqref{c2} can be assigned to the desired set $\{ \mu_1^*, \dots, \mu_\frac{n(n+1)}{2}^*\}$ in Definition \ref{defc2}.

In this case,
\begin{align}\label{c19}
T= \begin{pmatrix} T_u - \bar{A} \\ T_v \end{pmatrix},
\end{align}
where $T_u =  \alpha I$, $T_v$ is given by \eqref{c13}.
\end{corollary}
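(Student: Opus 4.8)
The plan is to reduce Corollary~\ref{corc1} to Theorem~\ref{Thmc1} by exhibiting an explicit change of variables that transforms the standard system \eqref{c2} into the form \eqref{c1}, exactly mirroring the discrete-time argument given in the proof of Corollary~\ref{cor1}. First I would invoke condition~i): since $\mathrm{Rank}(\bar B)=n$, there is an invertible matrix $Q$ with $\bar B Q=\begin{bmatrix} I & 0\end{bmatrix}$. Partitioning $BQ\triangleq\begin{bmatrix} L & F\end{bmatrix}$ and setting $H\triangleq A-L\bar A$, I would write the control as $U(t)=Q\begin{bmatrix} q(t)\\ v(t)\end{bmatrix}$ with $u(t)=\bar A x(t)+q(t)$, so that $\bar A x+\bar B U=\bar B Q\begin{bmatrix}q\\v\end{bmatrix}+\bar A x=u(t)$ and $Ax+BU=Ax+\begin{bmatrix}L&F\end{bmatrix}\begin{bmatrix}q\\v\end{bmatrix}=Hx+Lu+Fv$. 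Substituting into \eqref{c2} recovers exactly \eqref{c1}, with the Brownian term $u(t)\,d\sigma(t)$; this is the continuous-time analogue of Remark~\ref{rem1}, which the text states only for the discrete case but which goes through verbatim here.

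Next I would close the loop on the feedback gains. Applying Theorem~\ref{Thmc1} under condition~ii) — which is precisely Assumption~\ref{Assc1} for the transformed data $(H,L,F)$ — yields $T_u=\alpha I$ and $T_v$ as in \eqref{c13}, achieving the $\alpha$-spectrum assignment of Definition~\ref{defc2} for the operator $\mathcal{L}_c$ of \eqref{c8}. Under the state feedback $u(t)=T_u x(t)$, $v(t)=T_v x(t)$, we get $q(t)=u(t)-\bar A x(t)=(T_u-\bar A)x(t)$, hence $U(t)=Q\begin{bmatrix}(T_u-\bar A)x(t)\\ T_v x(t)\end{bmatrix}$; absorbing $Q$ (or noting the corollary states $T$ in the already-transformed coordinates) gives $T=\begin{pmatrix} T_u-\bar A\\ T_v\end{pmatrix}$ as in \eqref{c19}. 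Under this $T$, the closed-loop coefficients are $A+BT=H+LT_u+FT_v$ and $\bar A+\bar B T=T_u$, so the operator $\mathcal{L}_{c,T}$ in \eqref{c17} coincides with $\mathcal{L}_c$ in \eqref{c8}; therefore $\sigma(\mathcal{L}_{c,T})=\sigma(\mathcal{L}_c)$ equals the desired set $\{\mu_1^*,\dots,\mu_{n(n+1)/2}^*\}$, which is the claim.

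I do not expect a genuine obstacle here, since the corollary is a corollary: the content is entirely in Theorem~\ref{Thmc1} (whose own proof is by the Ackermann-type construction that places the $n$ eigenvalues $\lambda_i$ of $G+FT_v$, so that the symmetric rank-two products $\xi_i\xi_j'+\xi_j\xi_i'$ become eigenvectors of $\mathcal{L}_c$ with eigenvalues $\lambda_i+\lambda_j+\alpha^2$). The one point requiring mild care is bookkeeping the invertible matrix $Q$: the corollary writes $T$ in the coordinates in which $\bar B Q=\begin{bmatrix}I&0\end{bmatrix}$, so one should either state the result up to this fixed change of input basis or carry $Q$ explicitly through \eqref{c19}. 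A second minor point is confirming that the continuous-time operator identity $A+BT=H+LT_u+FT_v$, $\bar A+\bar BT=T_u$ holds with the partition chosen — this is a one-line verification using $BQ=\begin{bmatrix}L&F\end{bmatrix}$ and $\bar BQ=\begin{bmatrix}I&0\end{bmatrix}$. Given these, the proof is a two-step citation: transform \eqref{c2}$\to$\eqref{c1} via Remark~\ref{rem1} (continuous-time version), then apply Theorem~\ref{Thmc1}.
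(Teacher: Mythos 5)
Your proposal is correct and matches the paper's (omitted) argument: the paper simply states that the proof is analogous to the discrete-time Corollary~\ref{cor1}, which performs exactly your reduction --- use $\mathrm{Rank}(\bar B)=n$ to transform \eqref{c2} into the form \eqref{c1} via the change of input variables of Remark~\ref{rem1}, then apply Theorem~\ref{Thmc1} and read off $T$ from $q(t)=(T_u-\bar A)x(t)$. Your added care about carrying the invertible matrix $Q$ through to \eqref{c19} is a point the paper glosses over, but it does not change the route.
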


\begin{proof}
 The proof is similar to Corollary \ref{corc1}, and thus be omitted here.
\end{proof}
\subsection{Learning Algorithm}
When $H, L, F$ are unknown in system \eqref{c1}, we present a learning algorithm in this part to design the feedback gains for spectrum assignment.

To  this end, for a sufficiently small time interval $[0, \delta t]$, we denote the state of system \eqref{c7} at time $\delta t$ as $x(\delta t)$.
%Noting that system \eqref{c7} can be discretized on a sufficiently small time interval $[0, \delta t]$, we denote the state at time $\delta t$ as $x(\delta t)$. %where can be estimated as
%\begin{align}\label{Xc1}
%\begin{cases}
%x(\delta t) =[I_n + (G +  F T_v)\delta t +  \alpha w(\delta t)]x(0),\\
%x(0) = x_0,
%\end{cases}
%\end{align}
%where %represents the state on time $\delta t,$
% $w(\delta t)$ represents the random variable with a mean of $0$ and a variance of $\delta t.$
% For given $n$ initial values $x_1(0), x_2(0), ..., x_n(0), $
By letting the initial values as
\begin{align}
x_1(0)=&\frac{1}{\delta t}[1 \  0\  0 \cdots 0], \label{IV1}\\
x_2(0)=&\frac{1}{\delta t}[0 \  1 \ 0 \cdots 0], \\
&...\no\\
x_n(0)=&\frac{1}{\delta t}[0 \cdots  0\ 1],\label{IVn}
\end{align}
we can obtain the current states on time $\delta t$ from system  \eqref{c7} with the feedback gain $T_u =  \alpha I$ and the given feedback gain $T_v$, and denote as $x_1(\delta t, T_v),$ $x_2(\delta t, T_v), ...,$ $x_n(\delta t, T_v)$. Further, denote the observation $x(\delta t, T_v)=[x_1(\delta t, T_v), x_2(\delta t, T_v), ..., x_n(\delta t, T_v)],$
and calculate a new observation $Y_1(T_v)=x(\delta t, T_v)-\frac{1}{\delta t} I_n $,
% \begin{align}\label{Xc2}
%Y_1(T_s)= (H  +  \alpha L  +  F T_s) +\frac{\alpha}{\delta t}  W_s(\delta t).
%\end{align}
then the characteristic polynomial for $Y_1(T_v)$ and the corresponding coefficients are defined by
\begin{align}\label{XC4}
&\det(\lambda I - Y_1(T_v))\no\\
 = &\lambda^n + a_1(T_v)\lambda^{n-1} + \cdots +  a_n(T_v),
\end{align}
and
\begin{align}\label{XC5}
 a_c(T_v) = [ a_1(T_v),  a_2(T_v), \dots,  a_n(T_v)],
\end{align}
respectively. %$A(j+1),$ $C(j+1),$ and $J(p+1)$ can be defined similar to \eqref{Alg2}-\eqref{Alg4}.

Similar to Section \ref{sec3.2}, we next present the learning algorithm, which is divided into the following three steps.
%In the following, we to address the $\alpha$-spectrum assignment problem for stochastic system  \eqref{c1} with unknown matrix parameters. the procedure is

Firstly, definite an index function $J_c(p)$ with initial value $J_c(0)=0.$
By defining $T_0(p, j, i)=L_{i-1}$  for $i=1, 2, ..., n+1,$ and letting $T_v=T_0(p, j, i)$ in system \eqref{c1}, we can obtain the observation $Y_1(T_0(p, j, i))$ on time $\delta t$, the corresponding characteristic polynomial $\det(\lambda I - Y_1(T_0(p, j, i)))$, and the coefficients $ a(T_0(p, j, i))$ according to \eqref{IV1}-\eqref{XC5}.
Then we have the matrix $A_c(p, j+1)$ as
\begin{align}\label{Alggg2}
A_c(p, j+1) = \begin{bmatrix}
a((T_0(p, j, 2))) - a((T_0(p, j, 1))) \\
\vdots \\
a((T_0(p, j, n+1 )))   - a((T_0(p, j, n)))
\end{bmatrix},
\end{align}
and recursively compute the average of $A_c(p, j+1)$ as
\begin{align}\label{Alggg3}
C_c(p, j+1) = \frac{j}{j+1} C_c(p, j) + \frac{1}{j+1}A_c(p, j+1).
\end{align}
for $j=0, 1, ...$.  Therefore, we define the update of the index function $J_c(p)$ as % until $j=J_c(p+1)$, where
\begin{align}\label{Alggg4}
J_c(p+1)=\inf\{j: j>J_c(p), \det C_c(J_c(p+1)) \neq 0\}.
\end{align}

%Next, define $T(p+1, 0)$ as the arbitrary given initial value, we can obtain the observation of $Y_1(T(p+1, 0))$, the corresponding characteristic polynomial $\det(\lambda I - Y_1(T(p+1, 0)))$, and the coefficients $ a_c(T(p+1, 0))$. Denote the coefficient vector of the characteristic polynomial \eqref{16} as $a_c$.

Secondly, we design the following stochastic approximation algorithm: %calculate $T(p+1, s)$ by the Robbins-Monro algorithm
\begin{align}\label{Alggg33}
T(p+1, s+1) =& T(p+1, s) - \beta (s)[a_c(T(p+1, s)- a_c]\no\\
&\times C_c(J_c(p+1)), %\quad k_{p+1} \leq k < \sigma_{p+1},
\end{align}
for $s=1,2, ...$ with arbitrary given initial value $T(p+1, 0),$ where $ \beta (s)$ satisfies \eqref{bs}, $a_c(T(p+1, s))$ and $a_c$ denote the coefficient vectors of \eqref{XC5} with $T_v=T(p+1, s)$ and \eqref{c16}, respectively.
%where  $X_1(K(p+1, s))$ denotes the observation of $X(1)$ with the feedback gain $K(p+1, s)$ applied,
%$\beta(s)$ denotes a sequence of positive real numbers with $\sum_{s=1}^{\infty} \beta(s) = \infty \quad \text{and} \quad \sum_{s=1}^{\infty} \beta(s)^r < \infty$ for $r\in(1,2].$
%This procedure runs

Thirdly, we specify the termination condition of the algorithm \eqref{Alggg33}. For the given convergence threshold $\epsilon$, if
\begin{align}\label{ccondition}
|T(p+1, s+1)-T(p+1, s)| <\epsilon,
\end{align}
then we output the desired feedback gain $T_v=T(p+1, s+1)$. Otherwise, we run the algorithm \eqref{Alggg33} from $s = 1$ to $s(p+1)$, where
\begin{align}\label{XXC444}
s(p+1) = &\inf\{s : s > 0, \| T(p+1, s) - \beta (s)[a_c(T(p+1, s)\no\\
& - a_c] C_c(J_c(p+1))\| > M(p+1)\}.
\end{align}
with $M(p)$ satisfying \eqref{Mp1}-\eqref{Mp2}. Then letting $p=p+1$ and repeat the above procedures until condition \eqref{ccondition} is satisfied.
%$M(p)$ denotes a sequence of positive real numbers with \( M(p) < M(p+1) \), for all \( p \geq 0 \) and \( M(p) \to \infty \) as \( p \to \infty \). For convenience, we select $\beta (s)=\frac{1}{s}, M(p)=p$ in the following.

%By letting the initial values as
%\begin{align}\label{IV}
%x_1(0)=&\frac{1}{\delta t}[1 \  0\  0 \cdots 0], \\
%x_2(0)=&\frac{1}{\delta t}[0 \  1 \ 0 \cdots 0], \\
%&...\no\\
%x_n(0)=&\frac{1}{\delta t}[0 \cdots  0\ 1],
%\end{align}
%and the given feedback gain $T_s$, we can obtain the current states on time $\delta t$ from system  \eqref{c7} and denoted by $x_1(\delta t, K_0), x_2(\delta t, K_0), ..., x_n(\delta t, K_0)$. Denote $x( t, K_0)=[x_1( t, K_0), x_2( t, K_0), ..., x_n( t, K_0)],$ similar to \eqref{Xc1}, which is discretized as
% \begin{align}\label{Xc2}
%x^{k}(\delta t) =\frac{1}{\delta t} I_n + (H  +  \alpha L  +  F K^k) +\frac{\alpha}{\delta t}  W^k(\delta t).
%\end{align}
%Denote $Y(T_s)=x^{k}(\delta t)-\frac{1}{\delta t} I_n $ and $Z(k)=\frac{\alpha}{\delta t}  W^k(\delta t),$ then the observation $Y(k+1)$ satisfies \eqref{X3}, where the main diagonal elements of the diagonal matrix $Z(k)$ are random variables with a mean of 0 and a variance of $\frac{\alpha^2}{\delta t}$.
%The characteristic polynomial for $Y(k+1)$ and the corresponding coefficients are denoted by \eqref{X4} and \eqref{X5}, respectively.

\begin{algorithm}[h!]
\caption{Stochastic approximation-based learning algorithm}
\label{algorithmic2}
\begin{algorithmic}[1]
\STATE \textbf{Input:} Parameters $\delta t$, $n$, $\alpha$, $\lambda_i,$ $i=1, ..., n$
\STATE \textbf{Output:} Feedback gain $T_v$ for Problem 2

%\STATE Obtain $\mu_s^* = \lambda_i \lambda_j + \alpha^2,\quad  j\geq i, \  i, j = 1, \dots, n,$ $s=1, ...,\frac{n(n+1)}{2}$ %according to Definition \ref{def2}
\STATE Define the $n$ initial values $x_1(0), x_2(0), \dots, x_n(0)$ by \eqref{IV1}-\eqref{IVn}, the sequence of positive real numbers $\beta (s)=\frac{1}{s}, M(p)=p,$ the convergence threshold $\epsilon=10^{-8}.$

\STATE Let $J_c(0)=0$. Start from $p=0$.

\STATE Let $T_0(p, j, i)=L_{i-1}$, $j=J_c(p), J_c(p)+1, ...,$ $i=1, 2, ..., n+1.$ Obtain the corresponding observation $x(\delta t, T_s)$ by the continuous-time system with unknown coefficients. Define $Y_1(T_s)=x(\delta t, T_s)-\frac{1}{\delta t} I_n $, and calculate the corresponding characteristic polynomial \eqref{XC4}, the coefficients $ a_c(T_0(p,j,i))$ by \eqref{XC5}.

\STATE Calculate the matrix $A_c(j+1)$ by \eqref{Alggg2}, the matrix $C_c(j+1)$ by \eqref{Alggg3}. This step start from $j=J_c(p)$, end for $j=J_c(p+1)$, which defined by \eqref{Alggg4}.

\STATE Let $T(p+1, 0)$ be an arbitrary $n$-dimensional vector. Obtain the corresponding observation $x(\delta t, T_s)$ by the continuous-time system with unknown coefficients. Calculate $Y_1(T_s)=x(\delta t, T_s)-\frac{1}{\delta t} I_n $, the characteristic polynomial \eqref{XC4}, the coefficients $ a(T(p+1, s))$ by \eqref{XC5}.

\STATE  Update $T(p+1, s)$ by the stochastic approximation algorithm \eqref{Alggg33} for $s=1,2, ...$, until $s(p+1)$ defined by \eqref{XXC444}.

\STATE For $p=1, 2, ...$, continue step 5-8 until condition \eqref{ccondition} is satisfied. %the desired conditions are met.
%\STATE 11. Continue this process for each subsequent $k_i$ and $\sigma_i$ until the desired conditions are met.
%\STATE 12. \textbf{Remark:} During the periods with time $k : \sigma_p \leq k < k_p$, the observations are aimed at estimating the matrix $C$, while in the periods with $k : k_p \leq k < \sigma_{p+1}$, the observations are used to update the estimate for $F_0$.
\end{algorithmic}
\end{algorithm}

Accordingly, we present the corresponding algorithm as shown in Algorithm  \ref{algorithmic2} and the effectiveness result in the following Theorem.

\begin{theorem}\label{Thm4}
Under Assumption \ref{Ass1}, there exists a $p>0$ in Algorithm \ref{algorithmic2} such that $\lim_{s\rightarrow \infty}T(p,s)=T^\star$, where $T(p,s)$ calculated by \eqref{Alggg33}, $T^\star$ denotes the desired feedback gain matrix in \eqref{c13}.% satisfying that the characteristic polynomial $\det(\lambda I - (G+F T^\star))$ equals \eqref{c16}.}
\end{theorem}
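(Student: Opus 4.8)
The plan is to reduce the statement to the deterministic pole-assignment-with-learning result already used to prove Theorem \ref{Thm2}, namely the stochastic-approximation convergence theorem of \cite{Chen2000pole}, by exhibiting the matrix $Y_1(T_v)$ built in Algorithm \ref{algorithmic2} as the continuous-time counterpart of the noisy observation $X_1(K_v)$ of $G+FK_v$ used in the discrete-time analysis \eqref{X2}.

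First I would make the observation model explicit. With $T_u=\alpha I$ the closed-loop system \eqref{c7} becomes $dx(t)=(G+FT_v)x(t)\,dt+\alpha x(t)\,d\sigma(t)$, where $G=H+\alpha L$. Collecting the $n$ trajectories issued from the columns of $\frac{1}{\delta t}I_n$ into a matrix $X(t)$ gives $X(\delta t)=\frac{1}{\delta t}\Phi(\delta t)$, with $\Phi$ the fundamental matrix of this linear SDE and $\Phi(0)=I$; hence $Y_1(T_v)=X(\delta t)-\frac{1}{\delta t}I_n=(G+FT_v)+\mathcal{E}(T_v)$, where $\mathcal{E}(T_v)$ is the sum of a mean-zero It\^o-integral term and a deterministic remainder equal to $\frac{1}{\delta t}\big(e^{(G+FT_v)\delta t}-I\big)-(G+FT_v)$, which is $O(\delta t)$. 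Thus, up to a vanishing bias, $Y_1(T_v)$ is an unbiased noisy measurement of $G+FT_v$, exactly mirroring \eqref{X2}.

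Next I would observe that with this identification the whole machinery of Algorithm \ref{algorithmic2} --- the recursion \eqref{Alggg33}, the auxiliary matrices \eqref{Alggg2}--\eqref{Alggg3}, the index update \eqref{Alggg4}, the truncation bound \eqref{XXC444} with $M(p)$ satisfying \eqref{Mp1}--\eqref{Mp2}, and the step sizes $\beta(s)$ obeying \eqref{bs} --- is term-by-term the transcription of Algorithm \ref{algorithmic1}. Under Assumption \ref{Assc1} the pair $(G,F)$ is completely controllable, so near $T^\star$ the map $T_v\mapsto a_c(T_v)$ (coefficients of the characteristic polynomial of $G+FT_v$) is a well-posed parametrization whose unique root of $a_c(T_v)-a_c=0$ is the gain $T^\star$ of \eqref{c13}, and the averaged design matrix $C_c(J_c(p+1))$ is nonsingular for some finite $p$; these are precisely the hypotheses of the convergence theorem of \cite{Chen2000pole}. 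Applying that theorem with $(G,F)$, $Y_1(\cdot)$ and $T(p,s)$ in place of the corresponding discrete-time objects then yields $\lim_{s\to\infty}T(p,s)=T^\star$ for some $p>0$, which is the assertion.

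The step I expect to be the main obstacle is the verification of the noise hypotheses of \cite{Chen2000pole} for $\mathcal{E}(T_v)$: conditional mean zero (or asymptotically negligible) and uniformly bounded moments of the order matched to the exponent $r$ in \eqref{bs}. The mean-zero part follows from the martingale property of the It\^o integral, but the $O(\delta t)$ Euler-type bias in $Y_1(T_v)$ means the recursion \eqref{Alggg33} is driven by a slightly biased regression function; the argument must either fold this into the $\delta t\to 0$ regime implicit in the statement or appeal to the robustness of the stochastic-approximation scheme of \cite{Chen2000pole} to $o(1)$ perturbations of the regression function, so that the limit is still exactly the gain \eqref{c13}. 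A secondary point is confirming that the nonsingularity event in \eqref{Alggg4} is reached at a finite index $p$, which, as in the discrete case, follows from genericity of the determinant condition under the periodic probing sequence $\{L_0,\dots,L_n\}$ together with controllability of $(G,F)$.
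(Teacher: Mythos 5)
Your proposal follows essentially the same route as the paper: reduce Algorithm \ref{algorithmic2} to Algorithm \ref{algorithmic1} by showing that $Y_1(T_v)=x(\delta t,T_v)-\frac{1}{\delta t}I_n$ plays the role of the noisy observation $X(1)=(G+FK_v)+\alpha W(0)$ in \eqref{X2}, and then invoke the convergence theorem of \cite{Chen2000pole} exactly as in Theorem \ref{Thm2}. If anything, you are more careful than the paper, which writes $x(\delta t)=[I_n+(G+FT_v)\delta t+\alpha w(\delta t)]x(0)$ as an Euler-type estimate and thereby treats the observation noise $\frac{\alpha}{\delta t}W_s(\delta t)$ as exactly mean-zero, whereas you explicitly isolate the deterministic $O(\delta t)$ remainder $\frac{1}{\delta t}\bigl(e^{(G+FT_v)\delta t}-I\bigr)-(G+FT_v)$ and flag that the stochastic-approximation scheme must tolerate this bias --- a point the paper's proof silently absorbs into the phrase ``can be estimated as.''
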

\begin{proof}
Notice that the state $x(\delta t)$ of system \eqref{c7} can be estimated as %\eqref{Xc1},
\begin{align}\label{Xc1}
\begin{cases}
x(\delta t) =[I_n + (G +  F T_v)\delta t +  \alpha w(\delta t)]x(0),\\
x(0) = x_0,
\end{cases}
\end{align}
under the feedback gain $T_u =  \alpha I$, where %represents the state on time $\delta t,$
 $w(\delta t)$ represents the random variable with a mean of $0$ and a variance of $\delta t.$ For the initial values \eqref{IV1}-\eqref{IVn} and the given feedback gain $T_s$, by defining $Y_1(T_s)=x(\delta t, T_s)-\frac{1}{\delta t} I_n $,
we obtain that $Y_1(T_s)$ can be estimated as
 \begin{align}\label{Xc2}
Y_1(T_s)= (G  +  F T_s) +\frac{\alpha}{\delta t}  W_s(\delta t),
\end{align}
where diagonal matrix $W_s(\delta t), s=0, 1, ...$ are mutually independent, and the diagonal entries of each matrix $W_s(\delta t)$ are also mutually independent random variables with a mean of 0 and a variance of $\delta t$. This implies that the mean of $\frac{\alpha}{\delta t}  W_s(\delta t)$ is 0 and the variance is $\frac{\alpha^2}{\delta t}$. Thus, the role of $Y_1(T_s)$ in Algorithm \ref{algorithmic2} is equivalent to the observation of $X(1)$ in Algorithm \ref{algorithmic1} that satisfies  \eqref{X2}. Therefore, the result follows similar to Theorem \ref{Thm2}.
\end{proof}

\section{Numerical Examples}\label{sec5}
In this section, we provide numerical examples to show the effectiveness of the designed algorithms.
\subsection{Example 1}
Consider the discrete-time stochastic system \eqref{1} with parameters
$n=3,$
$H=  \begin{bmatrix}
-5 & 5 & -2\\
-4 & 3 & -1\\
6 & -4 & 5
\end{bmatrix},$
$L=\begin{bmatrix}
-5 & 5 & -2 \\
-4 & 3 & -1 \\
6 & -4 & -5
\end{bmatrix},$
$F=\begin{bmatrix}
1 \\ 0.5 \\ -1
\end{bmatrix},$ $\delta=0.01.$
Given the desired spectrum $\mu^* $ in Definition \ref{def2} with $\lambda_1=1 + i,  \lambda_2=1 - i, \lambda_3=3, \alpha=0.1$. According to Theorem \ref{Thm1} and based on the above parameters, we can obtain the feedback gain matrix $K_v$, which ensures the spectrum to the desired location $\mu^*$, as
\begin{align*}
K_v=\begin{bmatrix}
6 & -4 & 2
\end{bmatrix}.
\end{align*}
%which is also the desired feedback gain matrix $K^*$ in Theorem \ref{Thm2}.}

Next, consider the case with unknown parameters $H, L, F$. By using Algorithm \ref{algorithmic1}, we can obtain the feedback gain matrix $K_v$ and the termination index $p$.
%Noting that the stochastic approximation algorithm yields different results for each running,
Then we run the algorithm three times and obtain that %the feedback gain matrix $K_v$ and the value of $p$ as follows.
\begin{align*}
&K_v=\begin{bmatrix}
6.0028 & -3.9998 & 2.0063
\end{bmatrix},\quad p=1239,\\
&K_v=\begin{bmatrix}
5.9992 & -4.0000 & 1.9985
\end{bmatrix},\quad p=356,\\
&K_v=\begin{bmatrix}
6.0004 & -3.9997 & 2.0005
\end{bmatrix},\quad p=497,
\end{align*}
It is found that all the errors between $K_v$ and $K^*$ are less than $10^{-2}$. This indicates the effectiveness of Algorithm \ref{algorithmic1}.

\subsection{Example 2}
Consider the continuous-time stochastic system \eqref{1} with parameters
$n=1,$
$H= 21.6,$
$L=24,$
$F=1,$
Given the desired spectrum $\mu^*$ in Definition \ref{defc2} with $\lambda_1=30, \alpha=0.1$. In this case,  it is easy to verify that the target feedback gain matrix $T^*$
According to Theorem \ref{Thmc1} and based on the above parameters, we can obtain the feedback gain matrix $T_v=6,$ which ensures the spectrum to the desired location $\mu^*$.

When the parameters $H, L, F$ are unknown, by running Algorithm \ref{algorithmic2} with $\delta t=0.1$ for three times, we can obtain the feedback gain matrix $T_v$ and the value of $p$ as follows.
\begin{align*}
&T_v=6.0094, \quad p=7,\\
&T_v=6.0001,\quad p=7,\\
&T_v=6.0029,\quad p=8,
\end{align*}
Obviously, all the errors between  $T_v$ and $T^*$ are less than $10^{-2}$. This indicates the effectiveness of Algorithm \ref{algorithmic2}.

\section{Conclusion}\label{sec6}
In this paper,  the spectrum assignment for a class of stochastic systems with multiplicative noise has been studied. By proposing a novel $\alpha$-spectrum assignment for discrete-time and continuous-time stochastic systems, the conditions for $\alpha$-spectrum assignment and the feedback controllers based on the system parameters have been presented, and the stochastic approximation algorithms have been designed to learn the feedback gains
ensuring the spectrum of the stochastic systems to achieve the predetermined value. Numerical examples have demonstrated the effectiveness of the proposed algorithm.

%\section*{References}
\bibliographystyle{IEEEtran}
\bibliography{1}

\end{document}